\documentclass[11pt,a4paper]{amsart}
\usepackage[margin=1in]{geometry}
\usepackage[T1]{fontenc}
\usepackage{amsmath,amssymb,amsthm}
\usepackage{tikz}
\usetikzlibrary{arrows.meta}
\usepackage[hidelinks]{hyperref}
\newtheorem{theorem}{Theorem}
\newtheorem{lemma}[theorem]{Lemma}
\title{Four collinear points or six visible points}
\author{Stijn Cambie}
\thanks{Department of Computer Science, KU Leuven Campus Kulak-Kortrijk, 8500 Kortrijk, Belgium. Supported by a postdoctoral fellowship from the Research Foundation Flanders (FWO), grant number 1225224N}
\date{}
\hypersetup{pdftitle={Four collinear points or six visible points},
pdfsubject={A short proof that N(4,6) is at most 880}}
\begin{document}
\pagestyle{plain}
\begin{abstract}
Every finite planar set of at least $880$ points contains four collinear
points or six pairwise visible points. 
\end{abstract}
\maketitle
\section{Introduction}

Let $P$ be a finite set of points in the plane. Two distinct points of $P$
are \emph{visible} if the open segment joining them contains no point of
$P$. The \emph{visibility graph} $G(P)$ has vertex set $P$ and an edge
for each visible pair. A point on the open segment joining a nonvisible
pair is called a \emph{blocker} of that pair.
For integers $s,t\ge3$, let $N(s,t)$ be the least integer, if it exists,
such that every finite planar set of at least $N(s,t)$ points contains
$s$ collinear points or $t$ pairwise visible points.

K\'ara, P\'or, and Wood~\cite{KPW} introduced the
\emph{big-line--big-clique conjecture} in 2005: $N(s,t)$ is finite for
every fixed $s,t$. The case $N(3,t)=t$ is immediate, since without three
collinear points every pair is visible. Also $N(s,3)=s$: every
noncollinear finite set contains a triangle of minimum positive area,
whose vertices are pairwise visible.
K\'ara, P\'or, and Wood proved finiteness for $t\le4$.
Abel et al.~\cite{ABBCetal} established the case $t=5$ for every $s$,
and Bar\'at et al.~\cite{Barat} later obtained the quadratic bound
$N(s,5)\le328s^2$ through their theorem on empty pentagons.
In 2026, Bonnet~\cite{Bonnet} proved $N(4,6)\le10^{11055931}$,
settling the next open case. We give a short proof that
\[
                         N(4,6)\le880.
\]

A related, weaker question replaces the clique number by the chromatic
number: for fixed $s\ge3$ and $c\ge2$, is the size of $P$ bounded if $P$ has no
$s$ collinear points and $G(P)$ can be properly colored with $c$ colors?
This would follow from $N(s,c+1)<\infty$, since a $c$-colorable graph
contains no $K_{c+1}$. Hujter and Kisfaludi-Bak~\cite{HKB} proved this
chromatic version for $s=4$ and $c=5$, giving an upper bound of $2310$
points. More precisely, their Theorem~15 gives $5h(6)-5$, where $h(6)$
is the least number of points in general position that forces an empty
convex hexagon. Here \emph{general position} means that no three points
are collinear, and \emph{empty} means that the convex hull contains no
other point of the set.
Heule and Scheucher's computer-assisted theorem~\cite{HS} gives
$h(6)=30$. Thus we use the following consequence:
\begin{quote}
A finite planar set with no four collinear points and a $5$-colorable
visibility graph has at most $145$ points.
\end{quote}
The proof, presented in the next section, is elementary (the bound can be improved by extending the computations, but we have no guess about the exact value).

\section{Proof}

For a graph $H$, write $e(H)$ for its number of edges and $N_H(v)$ for
the set of neighbors of a vertex $v$.

\begin{lemma}\label{graph}
If a $K_6$-free graph $H$ on $r\ge1$ vertices requires at least $q$ vertex deletions
to become $5$-colorable, where $0\le q\le r$, then
\[
             e(H)\le\frac{2r^2}{5}-\frac{q(r-q)}{10}.
\]
\end{lemma}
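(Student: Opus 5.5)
The plan is induction on $r$, peeling off either a single low-degree vertex or a whole copy of $K_5$, and then checking that the edges lost fit inside the drop in the bound. Write $f(r,q)=\frac{2r^2}{5}-\frac{q(r-q)}{10}$. The hypothesis ``at least $q$ deletions'' is monotone in $q$, so at any point I may replace $q$ by a smaller nonnegative value.

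The base case is $q=0$. Then $f(r,0)=2r^2/5$, and the claim is exactly Tur\'an's theorem for $K_6$-free graphs. Small $r$ can be checked directly, since $q\ge1$ forces $r\ge6$.

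For the inductive step, assume $q\ge1$. I will use two differences of $f$:
\[
f(r,q)-f(r-1,q-1)=\frac{4r-2}{5}-\frac{r-q}{10},\qquad f(r,q)-f(r-1,q)=\frac{4r-2}{5}-\frac{q}{10}.
\]
For any vertex $v$, the graph $H-v$ still needs at least $q-1$ deletions. So if some $v$ has $\deg v\le\frac{4r-2}{5}-\frac{r-q}{10}$, induction applied to $H-v$ with parameter $q-1$ gives the bound. Similarly, if $H-v$ still needs $q$ deletions and $\deg v\le\frac{4r-2}{5}-\frac{q}{10}$, induction with parameter $q$ gives it.

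The remaining case is that every vertex has degree larger than roughly $\frac{4r}{5}-\frac{r-q}{10}$, which is at least about $\frac{7r}{10}$. Here I would exploit a copy $S$ of $K_5$. One exists because $H$ is not $5$-colorable and has high minimum degree, so its edge count exceeds the Tur\'an threshold for $K_5$. Since $H$ is $K_6$-free, every vertex outside $S$ has at most $4$ neighbours in $S$. Hence $\sum_{s\in S}\deg s\le 4(r-5)+20$, and $S$ meets at most $4r-10$ edges. The graph $H-S$ needs at least $q-5$ deletions. However,
\[
f(r,q)-f(r-5,q-5)=4r-10-\frac{r-q}{2},
\]
so a crude removal of $S$ leaves a deficit of $\frac{r-q}{2}$ edges. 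To recover it, I would split the outside vertices into the classes $X_i$, consisting of those adjacent to $S\setminus\{s_i\}$, together with a class $Y$ of vertices having at most $3$ neighbours in $S$. Each vertex of $Y$ saves one edge. Each $X_i$ is a set whose members can all receive the colour of $s_i$, provided $X_i$ is independent; an edge inside $X_i$ together with $S\setminus\{s_i\}$ would form a $K_6$, so $X_i$ is indeed independent. Consequently, a $5$-coloring of $H$ minus few vertices can be built from the partition $X_1\cup\{s_1\},\dots,X_5\cup\{s_5\}$. Then $q\le|Y|+(\text{vertices in }\bigcup X_i\text{ creating conflicts})$, and each conflict should cost a missing edge relative to the complete $5$-partite count.

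The main obstacle is this last accounting. I must show that the total number of missing edges, compared with $\frac{2r^2}{5}$, is at least $\frac{q(r-q)}{10}$, using only that the sets $X_i$ are independent, that $Y$ has at most $3$ neighbours in $S$, and that at least $q$ vertices are unavoidable conflicts. My first attempt is a direct non-inductive count in this case. I would argue that each of the $q$ deleted vertices misses at least $\frac{r-q}{10}$ edges to the classes on average, by comparing its degree with the $\frac{4r}{5}$ it would have in a balanced complete $5$-partite graph. The factor $\frac1{10}$ then arises as $\frac12\cdot\frac15$, from double counting and the balanced part size. If that fails, I would fall back on an Andr\'asfai--Erd\H{o}s--S\'os-type structural lemma for $K_6$-free graphs of large minimum degree.
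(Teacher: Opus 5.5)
Your two single-vertex reductions are fine, and the differences of $f$ are computed correctly. But all the content of the lemma sits in the high-degree case, and the route you sketch there does not close.

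A small point first: minimum degree about $\frac{7r}{10}$ only guarantees about $\frac{7}{20}r^2<\frac38 r^2$ edges, so it does not force a $K_5$. This is harmless, because a $K_5$-free $H$ has $e(H)\le\frac38 r^2\le f(r,q)$, since $q(r-q)\le r^2/4$.

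The real problem is the accounting. Each $X_i\cup\{s_i\}$ is independent, so deleting $Y$ already leaves a $5$-colourable graph. There are therefore no ``conflicts'' to account for, and the only information you have is $|Y|\ge q$. Your $K_5$-removal, however, needs $|Y|\ge\frac{r-q}{2}$, and this genuinely fails on the following graph.

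Let $H$ be the balanced blow-up of $C_5$ with parts $B_1,\dots,B_5$ of size $a\ge5$, joined completely to $K_{2a,2a,2a}$. Then:
\begin{itemize}
\item $r=11a$ and $q=a$, since one must delete a whole $B_i$.
\item Degrees are $8a$ on the blow-up and $9a$ on the tripartite part, and $e(H)=47a^2\le f(11a,a)=\frac{237}{5}a^2$.
\item Neither reduction applies. All degrees exceed $\frac{4r-2}{5}-\frac{r-q}{10}=7.8a-0.4$. The only vertices with $H-v$ still needing $q$ deletions are the tripartite ones, of degree $9a>8.7a-0.4$. Lowering $q$ only weakens the target.
\item Every $K_5$ is an edge between $B_i$ and $B_{i+1}$ plus a transversal of the tripartite part. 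For such an $S$ you get $Y=B_{i+3}$ with $|Y|=a=q$, and exactly $43a-10=4r-10-|Y|$ edges meet $S$.
\item Your chain therefore yields only $e(H)\le f(r-5,q-5)+4r-10-|Y|=f(r,q)+4a$.
\end{itemize}
Your per-vertex heuristic also fails on this graph. A vertex of a minimum deletion set $B_i$ has degree $8a=\frac{4r}{5}-\frac{4a}{5}$, which falls short of $\frac{4r}{5}$ by less than $\frac{r-q}{10}=a$. The edge deficit is spread over the whole blow-up, not concentrated on the deleted vertices. Finally, $\delta(H)=\frac{8}{11}r$ lies below the Andr\'asfai--Erd\H{o}s--S\'os threshold $\frac{11}{14}r$ for $K_6$-free graphs, so that fallback gives nothing here.

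The missing idea, which is what the paper uses, is to build the clique greedily by maximum degree. This makes induction unnecessary. Take $R_1=V(H)$, let $v_i\in R_i$ have largest degree in $H$, set $R_{i+1}=R_i\cap N_H(v_i)$, and put $A_i=R_i\setminus R_{i+1}$.
\begin{itemize}
\item Every vertex of $A_i$ has degree at most $d_i=d_H(v_i)=r-a_i-\varepsilon_i$, where $\varepsilon_i$ counts the non-neighbours of $v_i$ outside $A_i$. Hence $2e(H)\le\sum_i a_id_i=r^2-\sum_i(a_i^2+a_i\varepsilon_i)$.
\item If the clique has at most four vertices, Cauchy--Schwarz gives $e(H)\le\frac38r^2$, which suffices.
\item Otherwise the vertices missing exactly one clique vertex form five independent sets, so $\sum_i\varepsilon_i\ge q$.
\item Choose $0\le x_i\le\varepsilon_i$ with $\sum_i x_i=q$ and apply Cauchy--Schwarz to $a_i+x_i/2$. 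This gives $\sum_i(a_i^2+a_i\varepsilon_i)\ge\frac{r^2+q(r-q)}{5}$.
\end{itemize}
This is the correct form of your ``$\frac12\cdot\frac15$'' intuition. Each extra miss lowers $d_i$ by one, and the maximality of $d_i$ transfers that loss to all $a_i\approx r/5$ vertices of $A_i$, rather than charging it to the deleted vertex itself.
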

\begin{proof}
We start building a maximal clique vertex by vertex. Set $R_1=V(H)$.
For each $i$ with $R_i\ne\varnothing$, choose $v_i\in R_i$ of largest
degree in the original graph $H$, and put
$R_{i+1}=R_i\cap N_H(v_i)$.
Stop when $R_{k+1}=\varnothing$. Each chosen vertex is adjacent to all
earlier choices, and the stopping condition makes
$\{v_1,\ldots,v_k\}$ a maximal clique; see Figure~\ref{greedy}
(cf.~\cite[Section~2]{BN}).

Put $A_i=R_i\setminus R_{i+1}$. Thus $A_i$ consists of the vertices
whose first nonneighbor in the sequence is $v_i$, counting $v_i$ itself
as a nonneighbor. The sets $A_i$ partition $V(H)$.
Let $a_i=|A_i|$, $d_i=d_H(v_i)$, and
$\varepsilon_i=r-a_i-d_i\ge0$.
Each vertex of $A_i$ was available when $v_i$ was chosen, so
\[
       2e(H)\le\sum_i a_i d_i
       =r^2-\sum_i(a_i^2+a_i\varepsilon_i).
\]
If $k\le4$, Cauchy's inequality gives $e(H)\le3r^2/8$, which suffices
since $q(r-q)\le r^2/4$. Thus assume $k=5$.
Deleting the vertices missing two or more clique vertices leaves five
independent sets: an edge between two vertices missing only $v_i$ would
complete a $K_6$ with the other four clique vertices. Since
$\sum_i\varepsilon_i$ counts all misses beyond the first, it is at least
$q$. Choose $0\le x_i\le\varepsilon_i$ with $\sum_i x_i=q$. Then by Cauchy-Schwarz again
\[
 \sum_i(a_i^2+a_i\varepsilon_i)
 \ge\sum_i(a_i+x_i/2)^2-\frac14\sum_i x_i^2
 \ge\frac{(r+q/2)^2}{5}-\frac{q^2}{4}
 =\frac{r^2+q(r-q)}5.
\]
\end{proof}

\begin{figure}[htbp]
\centering
\begin{tikzpicture}[x=1cm,y=1cm,font=\small,
  region/.style={draw,rounded corners=2pt,line width=.5pt},
  point/.style={circle,fill=black,inner sep=1.7pt}]
  \path[region,fill=black!3] (0,0) rectangle (13,2.6);
  \path[region,fill=black!8] (3.4,.3) rectangle (12.7,2.3);
  \path[region,fill=black!14] (6.8,.6) rectangle (12.4,2.0);
  \path[region,dashed,fill=white] (10.2,.9) rectangle (12.1,1.7);
  \node[anchor=north west] at (.12,2.55) {$R_1=V(H)$};
  \node[anchor=north west] at (3.52,2.25) {$R_2$};
  \node[anchor=north west] at (6.92,1.95) {$R_3$};
  \node at (11.15,1.3) {$R_4,\ldots$};
  \node[point,label=above:$v_1$] at (1.7,1.25) {};
  \node[point,label=above:$v_2$] at (5.1,1.25) {};
  \node[point,label=above:$v_3$] at (8.5,1.25) {};
  \node at (1.7,.65) {$A_1$};
  \node at (5.1,.65) {$A_2$};
  \node at (8.5,.82) {$A_3$};
\end{tikzpicture}
\caption{Building a single clique. Choose $v_i$ from $R_i$ by comparing
  degrees in $H$, then keep only its neighbors as candidates for the next
  choice. The discarded part $A_i=R_i\setminus R_{i+1}$ includes $v_i$.
  The regions show sets of vertices, not their positions in the plane.}
\label{greedy}
\end{figure}

\begin{lemma}\label{deletion}
Let $P$ be a finite set of points in the plane with no four collinear
points. If deleting at most four vertices from $G(P)$ makes it
$5$-colorable, then $|P|\le439$.
\end{lemma}
\begin{proof}
Enlarge the deleted set $Z$ to four points.
Choose a supporting line containing an edge of $\operatorname{conv}(Z)$.
It contains two or three points
of $Z$. Split the halfplane containing the remaining points by a second
line through them; if only one remains, choose this line to avoid
$P\setminus Z$. This gives three closed convex cells covering the plane.
Convexity retains every undeleted blocker within a cell, and each cutting
line contains at most one point of $P\setminus Z$.
A deleted point cannot block two retained points in one cell:
both endpoints would have to lie on that cutting line.
The visibility graph of the retained set in each cell is therefore the
subgraph of $G(P)$ induced by those vertices. It is $5$-colorable and
hence has at most $145$ vertices. Thus
$|P\setminus Z|\le3\cdot145$, proving the desired bound.
\end{proof}

\begin{lemma}\label{quarters}
Let $m$ be a positive integer, and let $P$ be a set of $4m$ points in
the plane with no four collinear points. There exist two intersecting
lines, neither containing a point of $P$, such that each of the four
resulting open regions contains exactly $m$ points of $P$.
\end{lemma}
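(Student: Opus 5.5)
Following the classical ham-sandwich/equipartition argument, we rotate a halving line and track a second halving line of the two halves.

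\emph{Step 1: halving lines in every direction.} For each direction $\theta\in[0,\pi]$, consider lines with direction $\theta$. Call $\theta$ \emph{good} if no two points of $P$ span a line of direction $\theta$; all but finitely many directions are good. For a good direction, sweeping a line of that direction across the plane passes the points one at a time. Some line $\ell_\theta$ of direction $\theta$ therefore avoids $P$ and has exactly $2m$ points on each side. Call these sides $L_\theta$ and $R_\theta$, with $L_\theta$ the left side with respect to the orientation $\theta$. The partition $\{L_\theta,R_\theta\}$ is locally constant on good directions, and passing from $\theta$ to $\theta+\pi$ swaps the two sets.

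\emph{Step 2: a second halving line in a different direction.} Fix a second direction, for instance $\theta+\pi/2$, or more flexibly any direction $\varphi\ne\theta$. Choose a line $k$ of direction $\varphi$ avoiding $P$ such that $k$ has exactly $m$ points of $L_\theta$ on its "positive" side. Lines of a fixed direction are totally ordered, and the count of $L_\theta$-points on the positive side changes by one at a time when $\varphi$ is good for $L_\theta$, so such a $k$ exists. Let $f(\theta)$ be the number of points of $R_\theta$ on the positive side of $k$. If $f=m$, the four open regions each contain $m$ points and we are done. The lines intersect because $\varphi\ne\theta$, and neither contains a point of $P$.

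\emph{Step 3: continuity and parity.} As $\theta$ rotates to $\theta+\pi$ (with $\varphi=\theta+\pi/2$ rotating accordingly), the roles of $L$ and $R$ swap, and the positive side of $k$ flips. The quantity $g(\theta)=f(\theta)-m$ should therefore change sign: $g(\theta+\pi)=-g(\theta)$, because the positive sides of $k$ complement and $L,R$ exchange. An intermediate value argument then needs $g$ to change by at most $1$ at each nongood direction, so that $g$ must hit $0$.

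\emph{Main obstacle.} The hard step is this discrete continuity. At a critical direction, where a point pair spans the current direction of $\ell$ or $k$, two points can swap simultaneously. One point can also move from $L$ to $R$ while another moves the other way, or $k$ may be forced to jump. The hypothesis of no four collinear points matters here: at most three points lie on a line. I would handle the events one at a time, after perturbing the rotation so that only one point pair is critical at any instant. Each event is then one of two kinds. In the first, a single point crosses $\ell$, exchanged with another; since $\ell$ rotates about a point between them, I expect $g$ to change by at most one. In the second, $k$ swaps two points of the same half, which leaves $g$ unchanged. Where a jump of two might occur, I would instead realize the configuration at the critical direction with the line through the critical points shifted slightly, and use the extra freedom of choosing $k$ among an interval of positions to get an intermediate value. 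If this bookkeeping becomes messy, the fallback is to replace each point by a tiny disk, apply the continuous ham-sandwich theorem for equipartitions by two lines, and let the radius shrink, using finiteness and the no-four-collinear condition to push the limiting lines off $P$.
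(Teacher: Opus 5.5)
The gap is in Step~3, and it goes beyond the discrete bookkeeping you flag. The identity $g(\theta+\pi)=-g(\theta)$, which you treat as the easy part, is false: the two reversals you invoke cancel. At $\theta+\pi$ the second line is not the old $k$. It must halve $L_{\theta+\pi}=R_\theta$, so it lies in the halving strip of $R_\theta$.

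To see that the sign survives, write $t=\langle x,u_\theta\rangle$ with $u_\theta$ the unit vector of direction $\theta$. Say the positive side of $k$ is $\{t<t_k\}$; the other convention is symmetric. Then at $\theta+\pi$ the positive side of the new line, at position $s$, is $\{t>s\}$. Suppose $g(\theta)>0$. Then at least $m+1$ points of $R_\theta$ satisfy $t<t_k$, so $t_k$ exceeds the $(m+1)$-st smallest $t$-value of $R_\theta$. It therefore exceeds $s$, which has exactly $m$ points of $R_\theta$ above it. The new positive side thus contains the $m$ points of $L_\theta$ with $t>t_k$, and $g(\theta+\pi)\ge0$. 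So $g$ never changes sign under $\theta\mapsto\theta+\pi$. For example, $P=\{(0,1),(1,2),(5,-1),(6,-2)\}$ with $m=1$ gives $g(0)=g(\pi)\ne0$. The intermediate value argument has nothing to work with.

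Two further problems remain even apart from this.
\begin{itemize}
\item $f(\theta)$ is not well defined. Points of $R_\theta$ inside the halving strip of $L_\theta$ make it depend on where you put $k$.
\item The claim that $g$ moves by at most one at critical directions is unsupported. When $L_\theta$ changes, its halving strip can jump past many points of $R_\theta$.
\end{itemize}
The disk fallback only relocates the difficulty. The limiting lines may each contain up to three points of $P$, and pushing them off $P$ with exact counts is the same perturbation problem.

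The repair is to keep the first line fixed, as the paper does. Choose a line $\ell$ avoiding $P$ with sides $A$ and $B$ of size $2m$, then bisect $A$ and $B$ by a single second line (discrete ham sandwich). In your terms, rotate only the direction of $k$ through $\pi$. Now $A$ and $B$ stay put while the sides of $k$ flip, so the sign genuinely changes.

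To make this rigorous without an ill-defined count, compare strips instead of counting points. For a unit vector $u$, let $c_A(u)$ and $c_B(u)$ be the midpoints between the $m$-th and $(m+1)$-st smallest values of $\langle p,u\rangle$ over $A$ and over $B$, respectively. Then $h=c_A-c_B$ is continuous with $h(-u)=-h(u)$, so $h(u_0)=0$ for some $u_0$.
\begin{itemize}
\item If both halving intervals at $u_0$ are nondegenerate, the line $\langle x,u_0\rangle=c_A(u_0)$ lies strictly inside both and works.
\item If exactly one degenerates, then for $u\ne u_0$ close to $u_0$ that interval opens up while staying inside the other. Any line with normal $u$ inside it works.
\item Both cannot degenerate, since equal midpoints would then put two points of $A$ and two of $B$ on one line. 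This is exactly where the no-four-collinear hypothesis enters.
\end{itemize}
Finally, $u_0$ is not normal to $\ell$, because in that direction the two intervals lie on opposite sides of $\ell$; hence the two lines intersect.

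If you prefer to rotate both lines, the classical version lets $k$ halve all of $P$ rather than $L_\theta$, and rotates by $\pi/2$. That swaps the two lines and negates the difference between adjacent quadrant counts.
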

\begin{proof}
First choose a line containing no point of $P$ that separates $P$ into
sets $A,B$ of size $2m$.
Next, apply the discrete Ham sandwich theorem to bisect both $A$ and $B$ with a single (second) line.
\end{proof}

\newpage

\begin{proof}[Proof that $N(4,6)\le880$]
Suppose $P$ has at least $880$ points, no four collinear points, and no
visible $K_6$. Rotate the coordinate axes so that all points of $P$
have distinct horizontal coordinates, and retain the $880$ leftmost
points. Any point lying between two retained points has its horizontal
coordinate between theirs, so it is also retained. Thus visibility
between retained points is unchanged. We may therefore assume
$|P|=880=4m$, where $m=220$.

Let $Q_1,\ldots,Q_4$ be the sets of points in the four regions supplied
by Lemma~\ref{quarters}, in cyclic order, and put $H_i=Q_i\cup Q_{i+1}$,
with indices modulo four. Each $Q_i$ is the intersection of $P$ with a
convex region, and each $H_i$ is its intersection with a halfplane.
The segment between two points of a convex region stays in that region;
hence all its blockers are retained. The visibility graphs of these
eight sets are therefore induced subgraphs of $G(P)$. Write $b(U)$ and
$g(U)$ for their numbers of nonvisible (\textbf{b}locked) and visible (\textbf{g}ood) pairs.

Every nonvisible pair $x,y$ has a unique blocker $z$, and the two
pairs $x,z$ and $z,y$ are visible.
If $x$ and $y$ lie in one region, their pair is counted in two halfplanes
$H_i$, and both pairs through $z$ stay in that region. If they lie in
adjacent regions, their pair is counted once, and one of the pairs
through $z$ stays in a single region. Pairs in opposite regions contribute
nothing. Distinct collinear
triples use disjoint pairs, since there are no four collinear points.
Consequently
\[
 \sum_{i=1}^4 b(H_i)\le\sum_{i=1}^4g(Q_i),
 \qquad
 \sum_{i=1}^4\bigl(b(H_i)+b(Q_i)\bigr)\le \sum_{i=1}^4\bigl(g(Q_i)+b(Q_i)\bigr)    =4\binom m2.
\]

Every $Q_i$ has more than $145$ points, so requires at least one deletion
for $5$-colorability. Every $H_i$ has $440$ points, so requires at least
five by Lemma~\ref{deletion}. Put $\beta(r)=\binom r2 - \frac 25 r^2=r^2/10-r/2$.
Lemma~\ref{graph} gives, for each $i$,
\[
 b(Q_i)+b(H_i)
 \ge\beta(m)+\beta(2m)+\frac{m-1+5(2m-5)}{10}
 =\binom m2+\frac{m-26}{10}>\binom m2.
\]
Summing yields the contradiction.
\end{proof}

\section*{Acknowledgments and use of AI}

Marius Tiba presented the visibility problem at the MATRIX--MFO Tandem
Workshop \emph{Combinatorial Interchange}. The bound and its proof were
developed through successive revisions with assistance from GPT-5.6 and
GPT-6. The author checked the proof carefully, had a look at the references and did some minor editing.

\end{document}